\documentclass[preprint,12pt]{elsarticle}


\usepackage{comment}
\usepackage{url}
\usepackage{graphics}
\usepackage{graphicx}
\usepackage{xcolor}
\usepackage{epsfig}
\usepackage{array}
\usepackage{multicol}

\usepackage{amsthm,amsmath,amsfonts,amssymb}

\usepackage{lineno}
\usepackage{hyperref}
\usepackage{amsfonts}
\usepackage{amsmath}



\usepackage{amssymb}
\usepackage{pifont}

\def\Q{\ns Q}




\def\v{\mbox{\boldmath $v$}}

\def\vec0{\mbox{\boldmath $0$}}


\def\M{\mbox{\boldmath $M$}}

\def\M{\mbox{\boldmath $M$}}

\def\Q{\mbox{\boldmath $Q$}}

\journal{Discrete Mathematics}

\begin{document}

\begin{frontmatter}


\title{Bounds in radial Moore graphs of diameter 3}

\author[mat]{Jesús M. Ceresuela}
\ead{jesusmiguel.ceresuela@udl.cat}
\author[mat]{Nacho López}
\ead{nacho.lopez@udl.cat}






\address[mat]{Deptartament de Matemàtica, Universitat de Lleida, Lleida, Spain.}

\theoremstyle{plain}   
\newtheorem{theorem}{Theorem}[section]
\newtheorem{proposition}[theorem]{Proposition}
\newtheorem{corollary}[theorem]{Corollary}
\newtheorem{lemma}[theorem]{Lemma}
\newtheorem{definition}[theorem]{Definition}
\newtheorem{conjecture}[theorem]{Conjecture}
\newtheorem{question}{Question}
\newtheorem{example}[theorem]{Example}
\newtheorem{problem}[theorem]{Problem}
\newtheorem{observation}[theorem]{Observation}
\newcommand{\rad}{\mathrm{rad}}
\begin{abstract}

Radial Moore graphs are approximations of Moore graphs that preserve the distance-preserving spanning tree for its central vertices. One way to classify their resemblance with a Moore graph is the status measure. The status of a graph is defined as the sum of the distances of all pairs of ordered vertices and equals twice the Wiener index. In this paper we study upper bounds for both the maximum number of central vertices and the status of radial Moore graphs. Finally, we present a family of radial Moore graphs of diameter $3$ that is conjectured to have maximum status.
\end{abstract}

\begin{keyword}
radial Moore graph \sep degree/diameter problem \sep Moore bound \sep diameter \sep status \sep Wiener index
\end{keyword}
\end{frontmatter}

\section{Introduction}

The number of vertices of a graph that has maximum degree $d$ and maximum diameter $k$ is upper bounded by
\begin{equation}
    M(d,k)=1+d+d(d-1)+d(d-1)^2+\cdots+d(d-1)^{k-1}.
\end{equation}
Hoffman and Singleton gave the name of Moore graphs to graphs holding the mentioned restrictions and attaining the bound \cite{Hoffman1960}. Also the bound is known nowadays as the Moore bound.

The existence of Moore graphs is a well-studied and almost-closed problem (see \cite{MS12}). They must be regular of degree $d$ and the fact that Moore graphs exist for few values of $d$ and $k$ entails the question of how close can you get to this structures with feasible graphs. One way to do it is to preserve the Moore bound as the order of the candidate graphs as well as the regularity of the graph, and then relax the diameter condition a bit, by allowing the existence of some vertices in the graph with eccentricity just $k+1$. These graphs are named radial Moore graphs. More specifically, given two integers $d$ and $k$, a radial Moore graph is a $d$-regular graph of order $M(d,k)$ with radius $k$ and diameter $k+1$.

The existence of such graphs has been thoroughly studied. In \cite{CCEGL2010} the authors give a construction of a radial Moore graph of diameter $3$, for all values of $d$. Radial Moore graphs of diameter $4$ also exist for every value of $d$ (see \cite{EXOO20121507,Knor2007SmallRadially,knor2012}). The existence of radial Moore graphs for any value of $k$ and large enough $d$ has been proved in \cite{GOMEZ201515}. This problem has also been addressed in the directed \cite{knor1996} and mixed \cite{CLC2023} version.

Radial Moore graphs are conceived to approach Moore graphs. There are many radial Moore graphs in some cases ($1062$ for $(d,k)=(3,3)$, see \cite{CCEGL2010}). Thus, a natural question would be, given the parameters $d$ and $k$, which is (\textit{are}) the closest radial Moore graph (\textit{graphs}) to being a Moore graph. To quantify the difference between two graphs, a closeness measure must be defined. A very suitable option is the status norm used in \cite{CCEGL2010} and in \cite{CLC2023} that sorts the radial Moore graphs according to the status vector. This measure is very convenient since every vertex in a Moore graph has the same status $s_{d,k}$, where 
    \begin{equation}\label{eq:stat}
    s_{d,k}=\frac{d\left( k(d-2)(d-1)^k-(d-1)^k +1 \right)}{(d-2)^2},
    \end{equation}
(see \cite{CLC2023}). The value of $s_{d,k}$ can be obtained even in the cases when Moore graphs do not exist, this allows to classify radial Moore graphs for all values of $d$ and $k$, regardless of the existence of a Moore graph. This is especially important since Moore graphs do not exist for most parameters. Besides, the status of a vertex of a radial Moore graph is lower bounded by $s_{d,k}$. Thus, according to this measure, finding the closest radial Moore graph is equivalent to find the radial Moore graph with minimal status. 

\subsection*{Organization of the paper}

Section \ref{sec:CentralV} is devoted to obtain a bound on the maximum number of central vertices that a radial Moore graph may have. In section \ref{sec:VertexBound} we highlight some results on the status for vertices of radial Moore graphs, including a general lower bound for the total status, we also give the construction and some properties of a family $G_d$ of radial Moore graphs of diameter $3$ that is conjectured to maximize the status. Finally, some concluding remarks and open problems are presented in section \ref{sec:conc}.

\subsubsection*{Terminology and notation}

Let $G=(V,E)$ be a connected graph with vertex set $V$ and edge set $E$. A {\em walk\/} of length $\ell\geq 0$ from $u$ to $v$ is a sequence of $\ell+1$ vertices, $u_0u_1\dots u_{\ell-1}u_\ell$, such that $u=u_0$, $v=u_\ell$ and each pair $u_{i-1}u_i$, for $i=1,\ldots,\ell$, is an edge of $G$. A walk whose vertices are all different is called a {\em path}. The length of a shortest path from $u$ to $v$ is the {\it distance\/} from $u$ to $v$, and it is denoted by $d(u,v)$. The set of vertices in $G$ at distance $i$ from vertex $v$ will be denoted by $\Gamma_i(v)$. The sum of all distances from a vertex $v$, $s(v)=\sum_{u\in V} d(v,u)$, is referred to as the {\em status\/} of $v$ (see \cite{BuckHara}). We define the {\em status vector\/} of $G$, $\mathbf{s}(G)$, as the vector constituted by the status of all its vertices. Usually, when the vector is long enough, we denote it with a short description using superscripts, that is, $\mathbf{s}(G):s_1^{n_1},s_2^{n_2},\dots,s_k^{n_k}$, where $s_1>s_2> \dots >s_k$, and $n_i$ denotes the number of vertices having $s_i$ as its local status, for all $1 \leq i \leq k$. Similarly, the {\em status \/} of a graph $G$, $s(G)$ is the sum of the components of its status vector. The {\em eccentricity\/} of a vertex $u$ is the maximum distance from $u$ to any vertex in $G$. A {\em central vertex} is a vertex having minimum eccentricity. The minimum eccentricity of all vertices is the {\em radius} of G. The maximum distance between any pair of vertices is the {\it diameter} of $G$.

\section{Bound on the maximum number of central vertices in a radial Moore graph}\label{sec:CentralV}

There are two kinds of vertices in a radial Moore graph of radius $k$: {\em 
 central vertices} are those vertices with eccentricity $k$ and they behave like every vertex in a Moore graph, in the sense that their corresponding distance-preserving spanning tree is the same. Besides, {\em non-central vertices} are those vertices with eccentricity $k+1$, and their corresponding distance-preserving spanning tree has one more layer containing some vertices at distance $k+1$. Most of the known radial Moore graphs have just one central vertex, but the optimal ones (those graphs closest to the Moore graphs in some kind of measure) have many central vertices. Since every vertex is a central vertex in a Moore graph, we would like to know how many central vertices may exist at most in a radial Moore graph. We will denote this value as ${\cal C}(d,k)$, following the notation in \cite{LG10}. The exact value of ${\cal C}(d,k)$ is known for few cases, where the total population of radial Moore graphs has been computed, that is, ${\cal C}(3,2)=4, {\cal C}(3,3)={\cal C}(4,2)=8$. The lower bound ${\cal C}(d,2)\geq 2d$ for $d>3$, is provided in \cite{CCEGL2010} by construction of a family of graphs of degree $d$ having $2d$ central vertices. Besides, the upper bound 
\[
{\cal C}(3,k) \leq 3\cdot 2^k+\frac{1}{5}\cdot 2^{1-k}\big((1-\sqrt{5})^k(\sqrt{5}-5)-(1+\sqrt{5})^k(\sqrt{5}+5)\big)
\]
is obtained in \cite{LG10} by exploiting the relationship between central and non-central vertices in a cubic radial Moore graph. Here we extend these ideas to provide a general upper bound for any degree $d$. To this end, we show the following results presented in \cite{CLC2023} and \cite{GL08} related to the mixed setting, but they can be applied in the particular case of undirected graphs. 

\begin{proposition}[\cite{CLC2023}]\label{ncneigbors}
    Let $G$ be a radial Moore graph and $w\in V(G)$ be a central vertex of $G$. Suppose $w$ is adjacent to a non-central vertex $u\in V(G)$. Then there exists another vertex in the neighborhood of $w$ different from $u$ which is also non-central.
\end{proposition}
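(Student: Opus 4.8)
The plan is to argue by contradiction: suppose $u$ is the \emph{only} non-central neighbour of $w$, and write $N(w)=\{u,a_1,\dots,a_{d-1}\}$ with $\ecc(a_i)=k$ for every $i$.

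First I would record the rigid structure forced by $w$ being central. From $\ecc(w)=k$ and $|V(G)|=M(d,k)$, a double count of the edges between consecutive distance-layers of $w$ gives $|\Gamma_i(w)|=d(d-1)^{i-1}$ for $1\le i\le k$, shows that every vertex of $\Gamma_i(w)$ ($1\le i\le k$) has a unique neighbour in $\Gamma_{i-1}(w)$ and (for $1\le i\le k-1$) exactly $d-1$ neighbours in $\Gamma_{i+1}(w)$, and shows that the only ``horizontal'' edges of $G$ lie inside $\Gamma_k(w)$. Thus $V(G)\setminus\{w\}$ decomposes into branches $B_x$ ($x\in N(w)$), each a perfect $(d-1)$-ary tree of height $k-1$ rooted at $x$; let $B_x^{(j)}$ denote its vertices at distance $j$ from $w$. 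The bookkeeping I need is: any $u$--$v$ path avoiding $w$ must descend to $\Gamma_k(w)$, cross a horizontal edge and climb back up, and this is strictly longer than the path through $w$; consequently $B_u^{(k)}\subseteq\Gamma_{k-1}(u)$, and for each $i$ every vertex $v\in\Gamma_k(w)\setminus B_{a_i}^{(k)}$ satisfies $d(a_i,v)\ge k$, with $d(a_i,v)=k$ if and only if $v$ has a neighbour in $B_{a_i}^{(k)}$.

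Next I would locate the far vertex. As $u$ is non-central there is $z$ with $d(u,z)=k+1$; since $w$ is central, $d(w,z)\le k$, while $d(w,z)\ge d(u,z)-d(u,w)=k$, so $z\in\Gamma_k(w)$. Because $B_u^{(k)}\subseteq\Gamma_{k-1}(u)$, the vertex $z$ cannot lie in $B_u$, so after relabelling $z\in B_{a_1}^{(k)}$; moreover $z$ has no neighbour in $B_u^{(k)}$, since any such neighbour would give $d(u,z)\le k$.

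Finally comes the contradiction. Each $a_i$ being central forces, via the criterion above, that every vertex of $B_u^{(k)}$ has a neighbour in each of $B_{a_1}^{(k)},\dots,B_{a_{d-1}}^{(k)}$; as it has only $d-1$ horizontal edges it has \emph{exactly} one into each of these $d-1$ sets, hence none inside $B_u^{(k)}$, so there are exactly $|B_u^{(k)}|$ edges between $B_u^{(k)}$ and $B_{a_1}^{(k)}$. Likewise every vertex of $B_{a_1}^{(k)}$ needs a neighbour in each of $B_{a_2}^{(k)},\dots,B_{a_{d-1}}^{(k)}$, so at most one of its $d-1$ horizontal edges can reach $B_u^{(k)}$ or stay inside $B_{a_1}^{(k)}$; since $|B_{a_1}^{(k)}|=|B_u^{(k)}|$, counting the $B_u^{(k)}$--$B_{a_1}^{(k)}$ edges from the $B_{a_1}^{(k)}$ side forces \emph{every} vertex of $B_{a_1}^{(k)}$ — in particular $z$ — to have a neighbour in $B_u^{(k)}$, contradicting the previous paragraph. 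The hard part is precisely this final matching/counting step together with the distance bookkeeping inside the branches that underlies it; the key mechanism is that each vertex of $\Gamma_k(w)$ has exactly $d-1$ horizontal edges, so the ``one edge per branch'' demands made by the central neighbours $a_i$ are perfectly tight and leave no room for the far vertex $z$.
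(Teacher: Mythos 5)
The paper does not prove this proposition; it imports it from \cite{CLC2023}, so there is no in-paper argument to compare against. Your proof is correct and self-contained: since $w$ is central and the order equals $M(d,k)$, the layers of $w$ are full, which indeed forces the Moore-tree structure (unique parent, exactly $d-1$ horizontal edges per vertex of $\Gamma_k(w)$, no other horizontal edges), and your two key deductions --- that a far vertex $z$ of $u$ must lie in $\Gamma_k(w)$ outside $B_u$ with no neighbour in $B_u^{(k)}$, and that the ``one horizontal edge into each other branch'' demands of the assumed central neighbours $a_1,\dots,a_{d-1}$ saturate all $d-1$ horizontal edges on both sides of the $B_u^{(k)}$--$B_{a_1}^{(k)}$ bipartition, forcing every vertex of $B_{a_1}^{(k)}$, including $z$, to have a neighbour in $B_u^{(k)}$ --- are both sound. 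One sentence is loosely phrased: the claim that a path avoiding $w$ ``is strictly longer than the path through $w$'' fails for targets in $\Gamma_k(w)$ of other branches (there the horizontal route is the shorter one), but nothing you actually use depends on it, since $B_u^{(k)}\subseteq\Gamma_{k-1}(u)$ follows directly from the tree path and the triangle inequality.
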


\begin{proposition}[\cite{GL08}] \label{nachgim}
    Every non-central vertex in a radial Moore graph must have at least two non-central neighbours.
\end{proposition}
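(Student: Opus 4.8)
The plan is to argue by contradiction: suppose $v$ is non-central but has at most one non-central neighbour, so at least $d-1$ of its neighbours $u_1,\dots,u_{d-1}$ are central. First I would collect the standard bookkeeping. Since $\ecc(v)=k+1=\diam(G)$ there is a vertex $w$ with $d(v,w)=k+1$, and $k+1=d(v,w)\le\ecc(w)\le k+1$ shows that $w$ is non-central too, with $v\in\Gamma_{k+1}(w)$ and $w\in\Gamma_{k+1}(v)$. For any non-central $x$, writing $\delta_i(x)=d(d-1)^{i-1}-|\Gamma_i(x)|\ge 0$, the order identity $\sum_{i=0}^{k+1}|\Gamma_i(x)|=M(d,k)$ gives $\sum_{i=1}^{k}\delta_i(x)=|\Gamma_{k+1}(x)|\ge 1$; and since every vertex of $\Gamma_i(x)$ ($i\ge 1$) has a neighbour in $\Gamma_{i-1}(x)$ we get $|\Gamma_{i+1}(x)|\le(d-1)|\Gamma_i(x)|$, hence $\delta_{i+1}(x)\ge(d-1)\delta_i(x)$: a deficiency, once present, propagates to and grows in all deeper layers, and as $\delta_1(x)=0$ the first deficient layer of $x$ has index $j\ge 2$.

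The main dichotomy is whether layers $1$ and $2$ of the distance partition from $v$ are ``clean'' (no edge inside a layer, every vertex with a unique neighbour in the previous layer). If they are not, then either two neighbours of $v$ are adjacent or some vertex of $\Gamma_2(v)$ has two neighbours in $\Gamma_1(v)$; in either case $v$ lies on a cycle of length $3$ or $4$, two of $v$'s neighbours lie on that cycle, and the Moore-bound argument applied at such a vertex exhibits a deficient layer for it, so it is non-central — and then $v$ has two non-central neighbours, a contradiction. Since $|\Gamma_{k+1}(v)|=\delta_2(v)\ge 1$ forces layers $1,2$ not to be both clean when $k=2$, this already settles the diameter-$3$ case. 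When $k\ge 3$ and layers $1,2$ from $v$ are clean, I invoke Proposition \ref{ncneigbors}: each central neighbour $u_m$ has a non-central neighbour $x_m\ne v$; cleanness of layer $1$ (no triangle through $v$) forbids $x_m\sim v$, so $x_m\in\Gamma_2(v)$, and cleanness of layer $2$ (no shared parents) makes $x_1,\dots,x_{d-1}$ pairwise distinct. The remaining task is to show that these $d-1$ distinct non-central vertices at distance $2$ from $v$ — each carrying, by the propagation inequality, its own deficiency located within distance $2$ of $v$ — cannot coexist with the order constraint $\sum_{i=0}^{k+1}|\Gamma_i(v)|=M(d,k)$; the case where $v$ has no non-central neighbour is handled identically with $d$ in place of $d-1$.

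The step I expect to be the main obstacle is this last one, for $k\ge 3$: converting ``$d-1$ distinct non-central vertices in $\Gamma_2(v)$'' into a genuine violation of the order $M(d,k)$ rather than a merely tight inequality, since the deficiencies those vertices force may overlap with the deficiency of $v$ itself, so a naive count does not suffice. I expect the clean route to combine Proposition \ref{ncneigbors}, applied once more to track how the non-central vertices near $v$ propagate further non-central vertices outward, with the growth inequality $\delta_{i+1}\ge(d-1)\delta_i$ and an edge count between the two deepest layers of the companion vertex $w$ — each vertex of $\Gamma_{k+1}(w)$ needs a neighbour in $\Gamma_k(w)$, while each vertex of $\Gamma_k(w)$ sends at most $d-1$ edges downward — which together pin the layer sizes tightly enough to force the contradiction.
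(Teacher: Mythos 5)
First, note that the paper does not prove this statement at all --- it is imported verbatim from \cite{GL08} --- so there is no in-paper proof to compare against; your attempt has to stand on its own. On its own, it is a partial proof with an honest but real hole. The bookkeeping (regularity gives $\delta_1(x)=0$, the order identity gives $\sum_{i=1}^k\delta_i(x)=|\Gamma_{k+1}(x)|\ge 1$, and $\delta_{i+1}\ge(d-1)\delta_i$) is correct, and the $k=2$ case genuinely closes: $\delta_2(v)\ge 1$ forces a triangle or a $4$-cycle through $v$, any vertex on a cycle of length at most $2k$ has a deficient layer and is therefore non-central, and such a cycle through $v$ carries two distinct neighbours of $v$. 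That part I would accept as written.

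For $k\ge 3$, however, the argument stops exactly where the difficulty begins. After reducing to the ``clean'' case and using Proposition \ref{ncneigbors} to manufacture $d-1$ pairwise distinct non-central vertices $x_1,\dots,x_{d-1}\in\Gamma_2(v)$, you assert that these ``cannot coexist with the order constraint'' but never derive the contradiction, and you flag this yourself as the main obstacle. Two specific problems: (i) the claim that each $x_m$ carries a deficiency ``located within distance $2$ of $v$'' is unjustified --- the first deficient layer of $x_m$ sits at some depth $j\ge 2$ in $x_m$'s \emph{own} distance partition and can lie anywhere in the graph, far from $v$; (ii) there is no a priori conflict between having many non-central vertices in $\Gamma_2(v)$ and the order being $M(d,k)$, since the quantity $\sum_x|\Gamma_{k+1}(x)|$ is not bounded above by anything you have established, so the naive count you hope to run has nothing to push against. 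When layers $1$ and $2$ from $v$ are clean, the first collision in $v$'s distance partition occurs at some level $j\ge 3$, and the resulting short cycle may live entirely inside the subtree of a \emph{single} neighbour of $v$, which is precisely the configuration your argument cannot rule out. Until that case is handled, the proposition is proved only for diameter $3$.
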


These results will be combined to obtain a new upper bound for ${\cal C}(d,k)$. 

\begin{proposition}\label{prop:centralvertices}
    Given $d>3$ and $k\geq2$, the number of central vertices ${\cal C}(d,k)$ in a radial Moore graph of degree $d$ and radius $k$ satisfies ${\cal C}(d,k) \leq \sum_{j=1}^k a(j)+a'(j)$, where $a(j)$ and $a'(j)$ hold the recurrence equation
        \begin{equation*}
        \left(\begin{array}{l}
            a(j)\\
            a'(j) \\
            b(j) \\
            b'(j) \\
        \end{array}\right)=
\left(\begin{array}{cccc}
d - 1 & d - 2 & 0 & 0 \\
0 & 0 & d - 3 & d - 2 \\
0 & 1 & 0 & 0 \\
0 & 0 & 2 & 1
\end{array}\right) 
        \left(\begin{array}{l}
            a(j-1) \\
            a'(j-1) \\
            b(j-1) \\
            b'(j-1) \\
        \end{array}\right)
   \end{equation*}
   
Moreover, the number of non-central vertices is at least of the order $\mathcal{O}(\sqrt{d^k})$ for $d$ large enough.
\end{proposition}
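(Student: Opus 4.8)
The plan is to build a breadth-first search tree rooted at a central vertex $w$ and track, level by level, how many vertices of each ``type'' can appear, using Propositions~\ref{ncneigbors} and \ref{nachgim} to constrain the branching. At each level $j$ I would partition the newly reached vertices into four classes according to two binary attributes: whether the vertex is central or non-central, and whether or not its parent in the BFS tree ``forces'' a non-central sibling (roughly: whether the parent already has a non-central child on record). The quantities $a(j),a'(j),b(j),b'(j)$ are meant to count these four classes at level $j$. Since $w$ is central, its distance-preserving spanning tree has the full Moore structure down to level $k$, so every vertex at level $j\le k$ has exactly $d-1$ children in the tree (one neighbour being its parent). The recurrence then records the worst case: a central vertex of one class contributes its $d-1$ children split between ``as many central as allowed'' and ``the forced non-central ones,'' while a non-central vertex, by Proposition~\ref{nachgim}, must have at least two non-central neighbours, so at most $d-3$ of its tree-children can be central.

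Concretely, I would justify each row of the matrix. The first row, $a(j)=(d-1)a(j-1)+(d-2)a'(j-1)$: a central vertex whose parent is ``unconstrained'' can have all $d-1$ children central (contributing $d-1$ per such vertex), whereas a central vertex that sits next to a non-central vertex must, by Proposition~\ref{ncneigbors}, have a second non-central neighbour, so only $d-2$ of its children can be central. The second row counts non-central children: these arise as the ``mandatory'' non-central neighbours, and the split $d-3$ versus $d-2$ again reflects Proposition~\ref{nachgim} (a non-central parent already uses up two neighbour-slots for non-central vertices). The rows for $b(j),b'(j)$ bookkeep how many of the level-$j$ vertices impose a constraint on their own children, feeding the counts forward; the coefficients $1$, $2$, $1$ encode that each forced non-central vertex generates exactly one (or two) further forced vertices at the next level. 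Summing $a(j)+a'(j)$ over $j=1,\dots,k$ and adding the root gives the claimed upper bound on $\mathcal{C}(d,k)$ — I would be slightly careful about whether the root is counted in $j=0$ or folded into the $j=1$ terms, matching the stated formula.

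For the asymptotic claim, I would diagonalise (or at least estimate the spectral radius of) the $4\times4$ transition matrix. The dominant eigenvalue governing the central counts is $d-1$, which would make $\mathcal{C}(d,k)$ of order $(d-1)^{k}$ — essentially the full Moore bound — so the interesting content is the \emph{gap}: the number of non-central vertices is $M(d,k)-\mathcal{C}(d,k)$, and I would show this is $\Omega(\sqrt{d^k})$ by showing that the non-central counts $a'(j)$ grow with a rate on the order of $\sqrt{d-1}$ relative to the leading $(d-1)^{j}$ term. More precisely, the total order $M(d,k)\sim d(d-1)^{k-1}$ minus the upper bound $\sum_j (a(j)+a'(j))$ leaves a remainder whose size is controlled by the second-largest relevant eigenvalue of the matrix; a short computation of the characteristic polynomial should exhibit an eigenvalue of magnitude $\Theta(\sqrt{d})$ (coming from the $2\times2$ block coupling $b,b'$ with entries $O(d)$ off one diagonal and $O(1)$ on the other), and the coefficient of that eigenvalue in the expansion of the non-central count is nonzero, giving the $\mathcal{O}(\sqrt{d^k})$ lower bound on non-central vertices for $d$ large.

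The main obstacle I anticipate is the combinatorial bookkeeping that justifies the exact matrix — in particular, making rigorous the claim that the four classes $a,a',b,b'$ capture \emph{all} the constraints forced by Propositions~\ref{ncneigbors} and \ref{nachgim} without double-counting, and that no further propagation (e.g. a non-central vertex forcing constraints back up toward the root, or two forced non-central neighbours coinciding) spoils the recurrence. One has to argue that the BFS tree at a central vertex is genuinely a tree down to level $k$ (true because eccentricity $k$ forces the Moore tree structure there), and then handle the boundary at level $k$ carefully, since level-$k$ vertices of a central vertex need not themselves be central. The asymptotic part is comparatively routine once the eigenvalues are in hand, but identifying which eigenvalue survives in the non-central count — rather than being annihilated by the initial conditions — requires writing out the initial vector $(a(0),a'(0),b(0),b'(0))$ explicitly and checking the relevant projection is nonzero.
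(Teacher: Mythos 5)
Your plan is essentially the paper's proof: the same four classes (central/non-central crossed with parent central/non-central), the same matrix recurrence justified by Propositions~\ref{ncneigbors} and \ref{nachgim}, and the same spectral argument (the characteristic polynomial splits off the factor $x-(d-1)$, whose eigenvector only feeds the $a$-count, while the remaining cubic has roots of magnitude $\Theta(\sqrt{d})$ governing the non-central counts $b,b'$). The one point you must not leave vague is the seed of the recurrence: the root $w$ has to be chosen as a central vertex adjacent to a non-central vertex (such a $w$ is available because a radial Moore graph contains both kinds of vertices), so that Proposition~\ref{ncneigbors} forces $b(1)\geq 2$ and $a(1)\leq d-2$; rooting at an arbitrary central vertex could give the initial vector $(d,0,0,0)$ and the recurrence would then return only the trivial Moore bound.
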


\begin{proof}
Let $u$ be a non-central vertex of a radial Moore graph. Proposition \ref{nachgim} forces two of the neighbors to be non-central, but since $d>3$, at least one of the neighbors is not forced to be non-central and we can consider it to be central. Let's call this vertex $w$. Since it is a central vertex, the spanning distance-preserving tree hanging at $w$ will have the structure of a Moore tree. Let's consider this tree. Due to proposition \ref{ncneigbors}, $v$ cannot be the only non-central child of $w$, and thus there must exist another non-central vertex at distance one from $w$. The rest of the $d-2$ children vertices of $w$ can still be considered central. Now we can produce the rest of the tree,  maximizing the number of central vertices, according to the following rules.
    \begin{itemize}
        \item Central vertices hanging from a central vertex will be named $a$-vertices, and their $d-1$ children have no restrictions, so we can consider them to be central vertices (and thus, also $a$-vertices).
        \item Central vertices hanging from non-central vertices will be named $a'$-vertices and are forced to have one non-central child according to \ref{ncneigbors} and the other $d-2$ children vertices can be assumed to be central.
        \item Non-central vertices hanging from a central vertex will be named $b$-vertices and must have at least two non central children vertices due to proposition \ref{nachgim}. The remaining $d-3$ vertices can be considered central vertices
        \item Non-central vertices hanging from non-central vertices will be named $b'$-vertices and must have at least one non-central child according to \ref{nachgim} and the other $d-2$ children vertices can be assumed to be central.
    \end{itemize}
    \begin{figure}[htbp]
    \centering\includegraphics[width=0.77\textwidth]{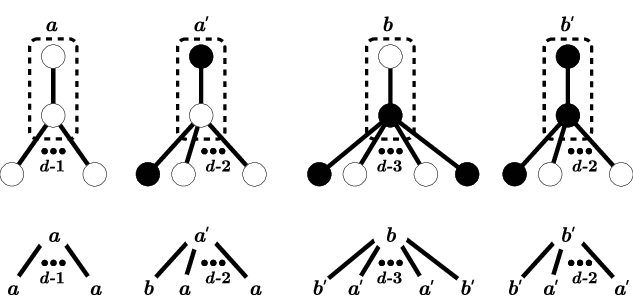}
    \caption{The four different kinds of vertices in the distance preserving tree hanging from $w$.}
    \label{recsk}
    \end{figure} 
    
    For a sketch of these 4 rules, see Fig. \ref{recsk}. Now let $a(j)$, $a'(j)$, $b(j)$ and $b'(j)$ be the number of $a$, $a'$, $b$, and $b'$ vertices respectively at distance $j$ from $w$. The four rules can be encoded in the recursive system given at beginning of the proposition, and hence the number of central vertices ${\cal C}(d,k)$ is at most $\sum_{j=1}^k a(j)+a'(j)$. The matrix of this system is 

    \begin{equation*}
\M=\left(\begin{array}{cccc}
d - 1 & d - 2 & 0 & 0 \\
0 & 0 & d - 3 & d - 2 \\
0 & 1 & 0 & 0 \\
0 & 0 & 2 & 1
\end{array}\right). 
   \end{equation*}
It has characteristic polynomial $
\left(x-(d-1)\right)\left(x^3 - x^2 - (d-3)x - (d-1)\right)$,
that is, $\M$ has eigenvalues $d-1$ and the roots of $q(x)=x^3 - x^2 - (d-3)x - (d-1)$. This cubic polynomial has discriminant $\Delta=-\frac{1}{27}(d^3-20d^2+56d-44)$ and hence $q(x)$ has just one real root whenever $d\leq 16$ and three real roots otherwise. Let $\alpha:=\alpha(d),\beta:=\beta(d),\gamma:=\gamma(d)$ be the three roots of $q(x)$. Now we will see that all these three eigenvalues are negligible if we compare them with the `leading' eigenvalue $d-1$, for $d$ large enough. Indeed, for $d>16$ we have that all three real roots $\alpha(d),\beta(d),\gamma(d)$ must be located into the interval of real numbers $\left(\frac{1}{3}(1-2\sqrt{3d-8},\frac{1}{3}(1+2\sqrt{3d-8}\right)$, due to Laguerre's bound (see \cite{zbMATH01820648}). In fact, computing their explicit values using Cardano's formulas ($
\alpha=
\frac{1}{3} \left(1+\sqrt[3]{18 \, d -26 + 27 \, \sqrt{\Delta}} +\sqrt[3]{18 \, d -26 - 27 \, \sqrt{\Delta}}\right)$, an the other two roots can be computed through the relationships $\beta+\gamma=1-\alpha$ and $\beta\cdot \gamma=(d-1)/\alpha$) shows that 
$$
\lim \frac{\alpha(d)}{\sqrt{d}}=\lim \frac{\beta(d)}{-\sqrt{d}}=1 \ \textrm{and} \ \lim \gamma(d)=-1.
$$
So, they have order at most $\mathcal{O}(\sqrt{d})$. The solution of the recurrence system can be described in terms of these roots:
    \begin{equation*}
        \left(\begin{array}{c}
            a(j)\\
            a'(j) \\
            b(j) \\
            b'(j) \\
        \end{array}\right)=\Q
\left(\begin{array}{cccc}
(d - 1)^{j-1} & 0 & 0 & 0 \\
0 & \alpha^{j-1} & 0 &  0 \\
0 & 0 & \beta^{j-1} & 0 \\
0 & 0 & 0 & \gamma^{j-1}
\end{array}\right)\Q^{-1} 
        \left(\begin{array}{c}
            a(1) \\
            a'(1) \\
            b(1) \\
            b'(1) \\
        \end{array}\right)
   \end{equation*}

where $\Q$ is the matrix whose columns are the eigenvectors corresponding to the roots of $q(x)$, and $a(1)=d-2,a'(1)=0,b(1)=2$ and $b'(1)=0$. Notice that the number of the $a$-vertices does not change the number of the other kind of vertices since they only produce $a$-vertices. This is reflected in the eigenvector of $d-1$ which is $\v=(1,0,0,0)^t$ since $\M \v=(d-1)\v$. This means that the number of non central vertices given by $\sum_{j=1}^k b(j)+b'(j)$ only depends on the roots $\alpha,\beta,\gamma$ whose sum from $1$ to $k$ has order at most $\mathcal{O}(\sqrt{d^k})$ for $d$ large enough. 
\end{proof}

Note that for $d\leq 16$ we obtain two complex roots that are bounded by $1+\max\{1,\frac{d-3}{d-1}\} \leq 2$ due to Cauchy's bound (see \cite{zbMATH01820648}). Again the leading root $d-1$ is taking into account only in the counting of $a$-vertices and hence the number of non-central vertices have order negligible when $k$ is large enough. We do not depict here the exact solutions of the recurrence, because they are tedious to describe, although curiously enough, the case $d=7$ is the only one where their representation is fair enough to write it down: In this case, the roots of $q(x)$ become $\alpha=3,\beta=-i-1,\gamma=i-1$, and hence
\begin{equation*}
\M=\left(\begin{array}{cccc}
6 & 5 & 0 & 0 \\
0 & 0 & 4 & 5 \\
0 & 1 & 0 & 0 \\
0 & 0 & 2 & 1
\end{array}\right)=\Q\left(\begin{array}{cccc}
6 & 0 & 0 & 0 \\
0 & 3 & 0 & 0 \\
0 & 0 & -i - 1 & 0 \\
0 & 0 & 0 & i - 1
\end{array}\right)\Q^{-1}
   \end{equation*}
where $\Q$ is the matrix of the corresponding eigenvectors of $\M$, which is:
\[
\Q=\frac{1}{5}\left(\begin{array}{cccc}
5 & 5 & 5 & 5 \\
0 & -3 & -i - 7 & i - 7 \\
0 & -1 & -3 i + 4 & 3 i + 4 \\
0 & -1 & 4 i - 2 & -4 i - 2
\end{array}\right),
\]
from where we obtain,
\[
\begin{array}{lll}
a(j)& = &7 \cdot 6^{j - 1} - \frac{60}{17} \cdot 3^{j - 1} - \left(\frac{1}{17} i - \frac{13}{17}\right) \, \left(i - 1\right)^{j - 1} + \left(\frac{6}{17} i - \frac{7}{17}\right) \, \left(-i - 1\right)^{j}, \\
a'(j)& =& \frac{36}{17} \cdot 3^{j - 1} + \left(\frac{4}{17} i - \frac{18}{17}\right) \, \left(i - 1\right)^{j - 1} - \left(\frac{7}{17} i - \frac{11}{17}\right) \, \left(-i - 1\right)^{j}, \\
b(j)&=&\frac{12}{17} \cdot 3^{j - 1} + \left(\frac{7}{17} i + \frac{11}{17}\right) \, \left(i - 1\right)^{j - 1} + \left(\frac{9}{17} i - \frac{2}{17}\right) \, \left(-i - 1\right)^{j}, \\
b'(j)&=&\frac{12}{17} \cdot 3^{j - 1} - \left(\frac{10}{17} i + \frac{6}{17}\right) \, \left(i - 1\right)^{j - 1} - \left(\frac{8}{17} i + \frac{2}{17}\right) \, \left(-i - 1\right)^{j}.
\end{array}
\]
The value of $\sum_{j=1}^k a(j)+a'(j)$ yields the bound,

\[
{\cal C}(7,k) \leq \frac{7}{5} \cdot 6^{k} - \frac{12}{17} \cdot 3^{k} - \left(\frac{1}{85} i - \frac{13}{85}\right) \, \left(i - 1\right)^{k} + \left(\frac{1}{85} i + \frac{13}{85}\right) \, \left(-i - 1\right)^{k} - 1.
\]
Notice that its leading coefficient $\frac{7}{5}6^k$ is the same than the corresponding one in the Moore bound $M(7,k)$. This means the number of non-central vertices is of the order $\mathcal{O}(3^k)$. We see it also from the computation of $\sum_{j=1}^k b(j)+b'(j)$, from where we obtain that the number of non-central vertices is at least 
\[
\frac{12}{17} \cdot 3^{k} + \left(\frac{1}{85} i - \frac{13}{85}\right) \, \left(i - 1\right)^{k} - \left(\frac{1}{85} i + \frac{13}{85}\right) \, \left(-i - 1\right)^{k} - \frac{2}{5}
\]
which is precisely the difference between $M(7,k)$ and ${\cal C}(7,k)$. \\

The bound given in Prop. \ref{prop:centralvertices} is constant for $k=2$, where ${\cal C}(d,2) \leq M(d,2)-6$ is obtained for any $d>3$. For other values of $d$ and $k$, ${\cal C}(d,k)$ has been computed and it is shown in table \ref{tab:gammark}.

\begin{table}[h]
\centering
\[
\begin{array}{|c|c|c|c|c|c|}
\hline
k \backslash d & 4 & 5 & 6 & 7 \\
\hline
3 & 41\,(53) & 92\,(106) & 171\,(187) & 284\,(302) \\
4 & 133\,(161) & 388\,(426) & 889\,(937) & 1756\,(1814) \\
5 & 423\,(485) & 1612\,(1706) & 4557\,(4687) & 10716\,(10886) \\
6 & 1327\,(1457) & 6596\,(6826) & 23079\,(23437) & 64804\,(65318) \\
7 & 4093\,(4373) & 26732\,(27306) & 116195\,(117187) & 390364\,(391910) \\
\hline
\end{array}
\]
\caption{Values of the upper bound of ${\cal C}(d,k)$ given in Prop. \ref{prop:centralvertices}. The value of the corresponding Moore bound is in parenthesis.}\label{tab:gammark}
\end{table}

\section{Bounds on the status of vertices in radial Moore graphs}\label{sec:VertexBound}

Let us denote by $\mathcal{RM}(d,k)$ the set of all non-isomorphic radial Moore graphs of degree $d$ and radius $k$. We recall that the status $s(v)$ of a vertex $v$ is the sum of all distances from this vertex $v$. This value is the same for all vertices in a Moore graph, and it is computed 
Of course, the same number is attained for central vertices in a radial Moore graph, but for non-central vertices (having vertices at distance $k+1$) we only know that $s(v)>s_{d,k}$. Hence, a natural question arises which is how far the value of $s(v)$ can go. Here, we establish upper bounds for the value of $s(v)$ in a radial Moore graph of diameter $3$.

\begin{lemma}
    Let $v$ be a vertex in a radial Moore graph of degree $d\geq 3$ and diameter $3$ containing one central vertex. Then, there is at least $d$ vertices at distance $2$ from $v$, that is, $|\Gamma_2(v)|\geq d$. Moreover, if the central vertex is in the neighborhood of $v$ then 
\begin{equation}
    |\Gamma_2(v)|\geq d+ \left\lceil \frac{1+\sqrt{4d-3}}{2}\right\rceil -1,
\end{equation}
if $d\neq 4$, and $|\Gamma_2(v)|\geq 5$ if $d=4$.

\end{lemma}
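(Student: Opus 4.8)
The plan is to exploit how tightly a single central vertex $c$ pins down the structure of $G$. Since $|V(G)|=M(d,2)=d^{2}+1$, we have $|\Gamma_{1}(c)|=d$ and $|\Gamma_{2}(c)|=d(d-1)$; counting the edges between $\Gamma_{1}(c)$ and $\Gamma_{2}(c)$ (at least $d(d-1)$ are needed for every vertex of $\Gamma_{2}(c)$ to lie at distance $2$, and at most $d(d-1)$ are available) forces $\Gamma_{1}(c)=\{u_{1},\dots,u_{d}\}$ to be independent and each vertex of $\Gamma_{2}(c)$ to have exactly one neighbour among the $u_{i}$. Hence $\Gamma_{2}(c)$ partitions into $d$ ``petals'' $P_{i}=N(u_{i})\cap\Gamma_{2}(c)$, each of size $d-1$, and every non-central vertex is either some $u_{i}$ or lies in a unique petal. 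This structure is the backbone of the whole argument.

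First, the bound $|\Gamma_{2}(v)|\ge d$. If $v=c$ then $|\Gamma_{2}(v)|=d(d-1)\ge d$. If $v$ is non-central, then $|\Gamma_{1}(v)|=d$, $|\Gamma_{2}(v)|+|\Gamma_{3}(v)|=d(d-1)$, and since every vertex of $\Gamma_{3}(v)$ has a neighbour in $\Gamma_{2}(v)$ while every vertex of $\Gamma_{2}(v)$ has a neighbour in $\Gamma_{1}(v)$, one gets $|\Gamma_{3}(v)|\le(d-1)\,|\Gamma_{2}(v)|$ and hence $|\Gamma_{2}(v)|\ge d-1$. To exclude equality, assume $|\Gamma_{2}(v)|=d-1$ and split on the location of $c$. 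If $c$ is a neighbour of $v$, say $v=u_{1}$, then $u_{2},\dots,u_{d}\in\Gamma_{2}(v)$, so $\Gamma_{2}(v)=\{u_{2},\dots,u_{d}\}$ and $\Gamma_{3}(v)=\Gamma_{2}(c)\setminus P_{1}$; but each $p\in P_{1}$ lies in $\Gamma_{1}(v)$, hence has no neighbour in $\Gamma_{3}(v)$, so all $d-1$ of its $\Gamma_{2}(c)$-neighbours must lie in $P_{1}$, which has only $d-2$ other vertices --- impossible. If instead $c\in\Gamma_{2}(v)$, then tightness of $|\Gamma_{3}(v)|\le(d-1)\,|\Gamma_{2}(v)|$ makes $\Gamma_{2}(v)$ independent and forces each vertex of $\Gamma_{2}(v)$ to have exactly one neighbour in $\Gamma_{1}(v)$ and each vertex of $\Gamma_{3}(v)$ to have exactly one neighbour in $\Gamma_{2}(v)$; in particular $c$'s neighbours in $\Gamma_{3}(v)$ have no neighbour in $\Gamma_{1}(v)$, so for $\ecc(c)=2$ the unique $\Gamma_{1}(v)$-neighbour $a$ of $c$ would have to be adjacent to all of $\Gamma_{1}(v)$, giving $\deg(a)\ge d+1$, a contradiction. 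Thus $|\Gamma_{2}(v)|\ge d$.

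Now suppose $c$ is a neighbour of $v$, say $v=u_{1}$. Then $\Gamma_{1}(v)=\{c\}\cup P_{1}$, and since $\Gamma_{1}(c)$ is independent, $\Gamma_{2}(v)=\{u_{2},\dots,u_{d}\}\sqcup Q$, where $Q:=\Gamma_{2}(v)\cap\Gamma_{2}(c)$ consists of the vertices in $P_{2}\cup\dots\cup P_{d}$ adjacent to $P_{1}$, and $\Gamma_{3}(v)=(\Gamma_{2}(c)\setminus P_{1})\setminus Q$. So $|\Gamma_{2}(v)|=(d-1)+|Q|$, and since $\lceil(1+\sqrt{4d-3})/2\rceil$ is exactly the least integer $m$ with $m(m-1)\ge d-1$, the claimed inequality is equivalent to $|Q|\,(|Q|-1)\ge d-1$ for $d\ne 4$, and to $|Q|\ge 2$ for $d=4$. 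One direction is quick: writing $e_{1}$ for the number of edges inside $P_{1}$, each $p\in P_{1}$ has $d-1$ neighbours in $\Gamma_{2}(c)$, so there are $(d-1)^{2}-2e_{1}$ edges between $P_{1}$ and $Q$; as each vertex of $Q$ has at most $d-1$ neighbours in $P_{1}$, we get $|Q|\ge(d-1)-\tfrac{2e_{1}}{d-1}$, which already yields $|Q|(|Q|-1)\ge d-1$ unless $P_{1}$ is rather dense.

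The dense case --- $P_{1}$ close to $K_{d-1}$, where every $p\in P_{1}$ has essentially one neighbour in $Q$ --- is the crux, and I expect it to be the main obstacle. Here I would bring in the diameter-$3$ hypothesis together with the uniqueness of $c$: for each $p\in P_{1}$ and each $z\in\Gamma_{3}(v)$ a shortest path between $p$ and $z$ has length at most $3$, and, since such a path can pass neither through $v$, nor through $c$, nor through the $u_{j}$ on the side of $p$, it must run through $Q$; combining this over all $p$ with the fact that the vertices of $\Gamma_{1}(c)\setminus\{v\}$ and of $\Gamma_{2}(c)$ must all stay non-central should force $Q$ to meet enough petals and carry enough internal adjacency --- in effect producing an injection of $P_{1}$ into the ordered pairs of distinct vertices of $Q$ --- whence $|Q|(|Q|-1)\ge d-1$. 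The delicate part will be the precise accounting of these short paths and the identification of which small configurations survive; the degenerate behaviour of those configurations for $d=4$ is what makes that degree exceptional, the guaranteed bound there weakening to $|Q|\ge 2$.
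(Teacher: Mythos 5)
Your proof of the first claim, $|\Gamma_2(v)|\ge d$, is complete and correct, and in fact somewhat cleaner than the paper's: you use the rigidity of the Moore tree rooted at the central vertex $c$ (independence of $\Gamma_1(c)$, the petal decomposition of $\Gamma_2(c)$) to dispatch both locations of $c$ by explicit degree counts, whereas the paper rules out $c\in\{v\}\cup\Gamma_1(v)$ via a triangle argument and then finds a forbidden pair of length-$2$ paths from $c$. Your reduction of the second claim is also the right one: with $v=u_1$ adjacent to $c$, the quantity $|Q|=|\Gamma_2(v)\cap\Gamma_2(c)|$ is exactly the paper's $\alpha$, $|\Gamma_2(v)|=(d-1)+|Q|$, and the target is equivalent to $|Q|(|Q|-1)\ge d-1$ (the paper reaches the same threshold as the condition $\alpha\cdot g_{\max}\ge d-1$ with $g_{\max}$ essentially $\alpha-1$).

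However, there is a genuine gap precisely where you say you expect one: the dense case, where $P_1$ is (close to) a clique and each $p\in P_1$ sends essentially one edge into $Q$. Your edge count $|Q|\ge (d-1)-\tfrac{2e_1}{d-1}$ degenerates to $|Q|\ge 1$ there, and the remainder of your argument is a plan ("should force $Q$ to meet enough petals", "in effect producing an injection of $P_1$ into ordered pairs of $Q$") rather than a proof; no actual injection is constructed and no inequality is derived. This is exactly the step where the paper does its real work: setting $g_i$ to be the number of neighbours of $i\in Q$ inside $P_1$ (so $\sum_i g_i=d-1$), it counts how many of the $(d-1)^2-\alpha$ vertices of $\Gamma_3(v)$ can reach $i$ in at most $3$ steps, namely at most
\[
(d-2)+(d-g_i-1)(d-2)+\sum_{j=1}^{\alpha}(d-g_j-1),
\]
and since the diameter-$3$ condition forces this to be at least $(d-1)^2-\alpha$, one gets $g_i\le \frac{d(\alpha-1)}{d-2}$; feeding this back into $\sum_i g_i=d-1$ yields $\alpha\big\lfloor\frac{d(\alpha-1)}{d-2}\big\rfloor\ge d-1$ and hence the stated ceiling (with $d=4$ exceptional because $\lfloor d/(d-2)\rfloor$ jumps there). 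Without some such quantitative accounting of which third-layer vertices a high-degree-into-$P_1$ vertex of $Q$ can still reach, your argument does not establish the bound; the first part of the lemma stands, but the "moreover" part is unproven as written.
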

\begin{proof}
    On one side, since the order of the graph is $d^2+1$ and the graph is $d$-regular the following equality must hold $$|\Gamma_2(v)|+|\Gamma_3(v)|=d^2+1-(|\Gamma_0(v)|+|\Gamma_1(v)|)=d(d-1).$$ On the other side, every vertex in $\Gamma_2(v)$ can be adjacent to a maximum of $d-1$ vertices in $\Gamma_3(v)$, since at least one of its edges must be adjacent to a vertex in $\Gamma_1(v)$. Thus, $$|\Gamma_3(v)|\leq|\Gamma_2(v)|(d-1).$$ Joining both expressions $$d(d-1)=|\Gamma_2(v)|+|\Gamma_3(v)|\leq|\Gamma_2(v)|+|\Gamma_2(v)|(d-1)=d|\Gamma_2(v)|$$ and thus $|\Gamma_2(v)|\geq d-1$.

    So far we have proved that the minimum number of vertices at distance $2$ of $v$ is $d-1$. Now we will prove that it is indeed $d$. Let's suppose that $v$ is a vertex such that $|\Gamma_2(v)|=d-1$. This is equivalent to say that every vertex in $\Gamma_2(v)$ hangs from a single vertex in  $\Gamma_1(v)$ and its remaining edges are incident to different vertices in $\Gamma_3(v)$.
    

    The distance tree hanging from $v$ must be similar to the sketch in Figure \ref{Proof2}, where the connections inside the dashed line box are unknown explicitly but we know that
    \begin{enumerate}
        \item every vertex $u\in\Gamma_1(v)={1,\ldots,d}$ has at least one connection with another member of $\Gamma_1(v)$.
        \item every vertex $w\in\Gamma_2(v)={1',\ldots,(d-1)'}$ hangs from one and only one vertex from $\Gamma_1(v)$ and has $d-1$ vertices hanging from itself that belong to $\Gamma_3(v)$.
        \item every vertex $x\in\Gamma_3(v)$ has one and only one edge incident to a vertex in $\Gamma_2(v)$ and the rest of them ($d-1$ in total) adjacent to other vertices in $\Gamma_3(v)$.
    \end{enumerate}
    \begin{figure}[htbp]
    \centering\includegraphics[width=0.6\textwidth]{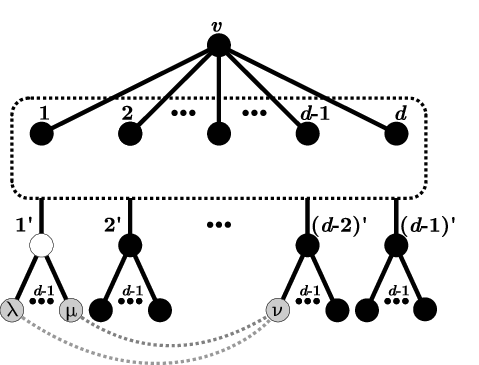}
    \caption{Sketch of the distance tree hanging from $v$.}
    \label{Proof2}
    \end{figure}

    Since $G$ is a radial Moore graph it must have a central vertex of eccentricity 2. From fact 1 we know that it must belong to $\Gamma_2(v)$. It cannot be $v$ or any vertex in $\Gamma_1(v)$ since a central vertex cannot belong to a triangle, and fact 1 shows that all of these vertices belong least one triangle. It cannot be in $\Gamma_3(v)$ since this would imply that it would see $v$ at distance 3. Without loss of generality let's suppose $1'$ is the central vertex. From fact 3 we know that the descendants of $1'$ will be incident only to $1'$ and other vertices in $\Gamma_3(v)$. What is more, since $1'$ is central, no edges are allowed between them. This implies a total of $(d-1)^2$ edges to be connected to a total of $(d-1)(d-2)$ vertices and thus there must exist a vertex $\nu\in\Gamma_3(v)$ not a descendant from $1'$ and vertices $\lambda,\mu\in\Gamma_3(v)$ descendants from $1'$ such that $\lambda\nu$ and $\mu\nu$ are both edges of $G$. This is a contradiction since on one side $1'$ is central and there are two different paths of length 2 from $1'$ to $\nu$. The first part of the proof along with this contradiction prove that $\Gamma_2(v)\geq d$. \\ 

    If $v$ is a neighbor of the only central vertex $c$ of $G$, the structure in Figure \ref{Proof3} is forced in order to attain the minimum possible number of vertices in $\Gamma_2(v)$. The $d-1$ vertices of $\Gamma_1(c)\setminus\{v\}$ will inevitably be in $\Gamma_2(v)$. Every two different vertices in $\Gamma_1(v)\setminus\{c\}$ are forced to share an edge to reduce the descendants at distance 2 to the minimum, that is, the induced subgraph generated by $\Gamma_1(v)\setminus\{c\}$ must be a complete graph. There must be $\alpha$ descendants of $\Gamma_1(v)\setminus\{c\}$ with $1\leq\alpha\leq d-1$ to take the $d-1$ remaining edges (one for every vertex in $\Gamma_1(v)\setminus\{c\}$). Thus it becomes clear that $\Gamma_2(v)\geq(d-1)+\alpha$. 
    
    \noindent Now, let us find some restrictions on $\alpha$. Let $g_i$ be the number of vertices of $\Gamma_1(v)\setminus\{c\}$ incident with vertex $i$ for $i=1,\ldots,\alpha$. Then, clearly $1\leq g_i\leq d-\alpha$ and that 
    \begin{equation}\label{eqsum}
        \sum_{i=1}^\alpha{g_i}=d-1.
    \end{equation}
    For every vertex in $\{1,\ldots,\alpha\}$ there must be an adjacency with a vertex in $\Gamma_1(c)\setminus\{v\}$, since $c$ is central and must see vertices $\{1,\ldots,\alpha\}$ in no more than two steps. Every vertex $i\in\{1,\ldots,\alpha\}$ has $d-g_i-1$ extra edges that can be adjacent either to other vertices in $\{1,\ldots,\alpha\}$ or to the $(d-1)^2-\alpha$ descendants of $\Gamma_1(c)\setminus\{v\}$. Gathering all the information we have, the following can be stated about the $(d-1)^2-\alpha$ descendants of $\Gamma_1(c)\setminus\{v\}$ regarding every vertex $i\in\{1,\ldots,\alpha\}$:

    \begin{figure}[htbp]
    \centering\includegraphics[width=0.7\textwidth]{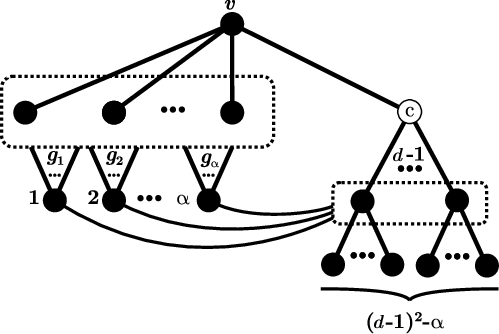}
    \caption{Sketch of the distance tree that must have a neighbor of a central vertex with maximum status in a radial Moore graph.}
    \label{Proof3}
    \end{figure}
    \begin{enumerate}
        \item A maximum of $d-g_i-1$ can see vertex $i$ in one step through a direct edge.
        \item A maximum of $d-2$ vertices can see vertex $i$ in two steps via its ascendant.
        \item A maximum of $(d-g_i-1)(d-2)$ can see vertex $i$ in 2 steps through an adjacency with one of the $d-g_i-1$ direct neighbors in the same level.
        \item The number of vertices that see $i$ in 3 steps are forced to see some $i'\in\{1,\ldots,\alpha\}$, $i\neq i'$ in one step in order to reach the ascendants of $i$ in 3 steps (remember that for every vertex in $\Gamma_1(v)\setminus\{c\}$ only one edge goes to some vertex $\{1,\ldots,\alpha\}$), so there is a total of $$\sum_{j=1,j\neq i'}^\alpha(d-g_j-1).$$
    \end{enumerate}
    These imply the following inequalities for every $i\in\{1,\ldots,\alpha\}$:
    \begin{equation}
       (d-2)+(d-g_i-1)(d-2)+\sum_{j=1}^\alpha(d-g_j-1)\geq(d-1)^2-\alpha
    \end{equation}
    where the terms coming from statements 1 and 4 have been gathered in the same summation. Developing this inequality, and taking into account (\ref{eqsum}) we get
    \begin{equation*}
       g_i\leq\frac{d(\alpha-1)}{d-2}.
    \end{equation*}
    Since it was also stated that $g_i\leq d-\alpha$, the following holds in general
    \begin{equation*}
       g_i\leq\min\left(d-\alpha,\frac{d(\alpha-1)}{d-2}\right).
    \end{equation*}
    This can be also written 
    \begin{equation*}
       g_i\leq
       \left\{
            \begin{array}{lr}
                \frac{d(\alpha-1)}{d-2} & \text{if } \alpha<\frac{d}{2}\\
                d-\alpha & \text{if } \alpha\geq\frac{d}{2}
            \end{array}
        \right.
    \end{equation*}
    This means that depending on the value of $d$ and $\alpha$, there $g_i$ reaches a maximum value that cannot be exceeded in order that the diameter of the graph is 3. Let 
    \begin{equation}\label{gmax}
       g_{max}=
       \left\{
            \begin{array}{lr}
                \left\lfloor\frac{d(\alpha-1)}{d-2}\right\rfloor & \text{if } \alpha<\frac{d}{2}\\
                d-\alpha & \text{if } \alpha\geq\frac{d}{2}
            \end{array}
        \right.
    \end{equation}
    be the maximum possible, according to (\ref{requisito}), number of edges incident from vertices $\Gamma_1(v)\setminus\{c\}$ to a vertex  $i=1,\ldots,\alpha$. In order that all $\Gamma_1(v)\setminus\{c\}$ vertices have one adjacency to vertices $i=1,\ldots,\alpha$ it must hold 
    \begin{equation}\label{requisito}
        \alpha\cdot g_{max}\geq d-1.
    \end{equation}
    If $\alpha<d/2$ this condition is written
    \begin{equation}\label{ineq}
        \alpha\cdot\left\lfloor\frac{d(\alpha-1)}{d-2}\right\rfloor\geq d-1.
    \end{equation}
    If $\alpha\geq d/2$ it can be easily checked that the inequality (\ref{requisito}) always holds, so this does not provide an extra constraint. Since in this case (\ref{ineq}) also holds, it can be stated as a general condition for $\alpha$. Now, since $\lceil \frac{d}{d-2} \rceil =1$ for $d>4$, the minimum value for $\alpha$ satisfying \eqref{ineq} is the ceiling of the largest root of the polynomial $x^2-x-(d-1)$, which is $\left\lceil \frac{1+\sqrt{4d-3}}{2}\right\rceil$ (and in fact this value also coincides for $d=3$ and differs just by one unit for $d=4$).
    \end{proof}


    The previous lemma can be used to obtain an upper bound on the status of any vertex of a radial Moore graph of diameter $3$.

\begin{corollary}
     The status of a vertex $v$ in a radial Moore graph of degree $d$ and diameter $3$ satisfies $s(v)\leq 3d(d-1)$. Moreover, if the graph contains one central vertex and $v$ is a neighbor of it then
     \begin{equation}\label{UpBound}
        s(v)\leq3d^2-3d-\left\lceil \frac{1+\sqrt{4d-3}}{2}\right\rceil+1.
    \end{equation}
for $d\neq 4$, and $s(v) \leq 35$ for $d=4$.
\end{corollary}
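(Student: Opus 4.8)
The plan is to turn the status of $v$ into a linear function of $|\Gamma_2(v)|$ and then substitute the lower bounds on $|\Gamma_2(v)|$ supplied by the previous lemma. First I would record the distance distribution around $v$. A radial Moore graph of degree $d$ and diameter $3$ has $d^2+1$ vertices and is $d$-regular, so $|\Gamma_0(v)|=1$, $|\Gamma_1(v)|=d$, and therefore
\[
|\Gamma_2(v)|+|\Gamma_3(v)|=d^2+1-1-d=d(d-1).
\]
Summing distances layer by layer and using $|\Gamma_3(v)|=d(d-1)-|\Gamma_2(v)|$,
\[
s(v)=1\cdot d+2\,|\Gamma_2(v)|+3\,|\Gamma_3(v)|=3d^2-2d-|\Gamma_2(v)|.
\]
Hence $s(v)$ decreases as $|\Gamma_2(v)|$ grows, and every lower bound on $|\Gamma_2(v)|$ translates into an upper bound on $s(v)$.

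For the unconditional estimate I would invoke the lemma with its weakest conclusion: since any radial Moore graph of radius $2$ has at least one vertex of eccentricity $2$, the hypothesis of the lemma holds automatically, so $|\Gamma_2(v)|\geq d$ and therefore $s(v)\leq 3d^2-2d-d=3d(d-1)$. For the refined estimate, assume $G$ has a single central vertex $c$ and that $v$ is a neighbour of $c$; this is exactly the setting of the lemma's second inequality, which gives $|\Gamma_2(v)|\geq d+\big\lceil\tfrac{1+\sqrt{4d-3}}{2}\big\rceil-1$ when $d\neq4$. Substituting yields precisely \eqref{UpBound}. Finally, for $d=4$ the lemma provides $|\Gamma_2(v)|\geq5$, hence $s(v)\leq 3\cdot16-8-5=35$.

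I do not expect a genuine obstacle: the corollary is the identity $s(v)=3d^2-2d-|\Gamma_2(v)|$ combined with the lemma, and the only things worth a line of care are that a radial Moore graph always has a central vertex (so the first bound is truly unconditional) and that ``$v$ a neighbour of the unique central vertex'' is verbatim the hypothesis under which the lemma's second bound was established, so no further case analysis is needed.
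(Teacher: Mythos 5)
Your proof is correct and follows essentially the same route as the paper's: both reduce the status to a counting argument over the distance layers (your identity $s(v)=3d^2-2d-|\Gamma_2(v)|$ is just an algebraic repackaging of the paper's extremal count $d+2d+3d(d-2)$) and then plug in the lemma's lower bounds on $|\Gamma_2(v)|$. Your explicit remark that a radial Moore graph always contains a central vertex, so the first bound is unconditional, is a worthwhile clarification but not a departure from the paper's argument.
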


\begin{proof}
    The maximum status of a vertex in a radial Moore graph of diameter $3$ will be attained when the number of neighbors at distance two is minimum. According to the previous lemma, we know that $|\Gamma_2(v)|\geq d$. A vertex with maximal status would be one such that $|\Gamma_1(v)|=d$, $|\Gamma_2(v)|=d$ and $|\Gamma_3(v)|=d(d-2)$ and thus, the status would be $d+2d+3d(d-2)=3d(d-1)$. The same counting argument with the corresponding improved bound for neighbors of the central vertex produces the other result.  
\end{proof}

The graph depicted in Figure \ref{SharpBound} has a vertex attaining this upper bound. 
\begin{figure}[htbp]
    \centering\includegraphics[width=0.45\textwidth]{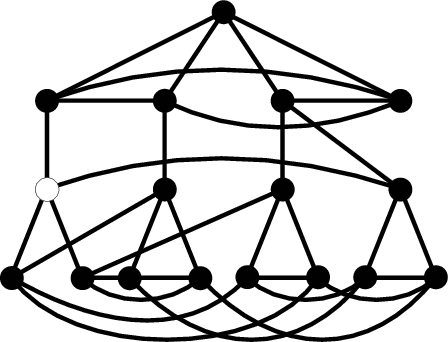}
    \caption{Radial Moore graph hanging from a vertex that holds the equality in \eqref{UpBound} with status 36, proving that, at least for degree $4$ there are examples of radial Moore graphs with vertices attaining this bound. The only central vertex is drawn in white and all their neighbors have status $\leq 35$.}
    \label{SharpBound}
\end{figure}
We recall that the status $s(G)$ of a graph $G$ is the sum of the status of all of its vertices. For instance, the status of a Moore graph of degree $d$ and diameter $3$ is $2d^2-d$. This is precisely a lower bound for the status of a radial Moore graph of the same parameters. An upper bound for the value of $s(G)$ is provided next.

\begin{corollary}
    The status of a radial Moore graph $G$ of degree $d$ and diameter $3$, containing one central vertex, satisfies
    \begin{equation}\label{eq:totalstatusbound}
        s(G)\leq 3d^4-3d^3+2d^2-d\cdot\left\lceil \frac{1+\sqrt{4d-3}}{2}\right\rceil
    \end{equation}
\end{corollary}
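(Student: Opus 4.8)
The plan is to partition the $d^2+1$ vertices of $G$ into three disjoint classes according to their position relative to the unique central vertex $c$, bound the status on each class using the results already proved, and add up.

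First, $c$ has eccentricity $2$, so its distance-preserving spanning tree is the complete Moore tree of degree $d$ and depth $2$; consequently $s(c)=s_{d,2}=2d^2-d$. Secondly, each of the $d$ vertices of $\Gamma_1(c)$ is a neighbour of the unique central vertex, so inequality \eqref{UpBound} applies to each of them; writing $L:=\left\lceil\frac{1+\sqrt{4d-3}}{2}\right\rceil$, their combined status is at most $d\bigl(3d^2-3d-L+1\bigr)$. Thirdly, the remaining $d^2+1-1-d=d^2-d$ vertices are ordinary vertices of a radial Moore graph of diameter $3$, so the general bound $s(v)\le 3d(d-1)$ of the preceding corollary applies to each of them, contributing at most $(d^2-d)\cdot 3d(d-1)$ in total.

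Adding the three contributions gives
\[
s(G)\ \le\ (2d^2-d)\ +\ d\bigl(3d^2-3d-L+1\bigr)\ +\ (d^2-d)\cdot 3d(d-1),
\]
and a routine expansion of the right-hand side collapses it to $3d^4-3d^3+2d^2-dL$, which is exactly \eqref{eq:totalstatusbound}. Thus the corollary follows from nothing more than this three-way count together with a polynomial identity.

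The one delicate point is the exceptional degree $d=4$: there the preceding corollary furnishes the neighbour bound $s(v)\le 35$ rather than the value produced by the closed formula, so in that case the estimate must be redone with $35$ in place of $3d^2-3d-L+1$ for the $d$ neighbours of $c$ (and the statement is to be read for $d\neq 4$, consistently with the earlier results). Apart from that, there is no substantive obstacle: all the combinatorial work was already carried out in the lemma and its first corollary, and what remains here is bookkeeping.
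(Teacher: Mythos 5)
Your proposal is correct and follows essentially the same route as the paper: sum the status of the central vertex ($2d^2-d$), bound its $d$ neighbours by \eqref{UpBound}, bound the remaining $d^2-d$ vertices by $3d(d-1)$, and expand. Your remark about the exceptional case $d=4$ is a fair observation that the paper's statement glosses over, but it does not change the argument.
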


\begin{proof}
   We have that $s(G)=\sum_{v_i\in V(G)}s(v_i)$. Also, $G$ has order $d^2+1$ and it must contain at least one central vertex (with status $d(2d-1)$). The upper bound \eqref{UpBound} for the neighbors of this central vertex and $s(v)\leq 3d(d-1)$ for the remaining ones gives the result.
\end{proof}

\begin{figure}[htbp]
\centering\includegraphics[width=0.3\textwidth]{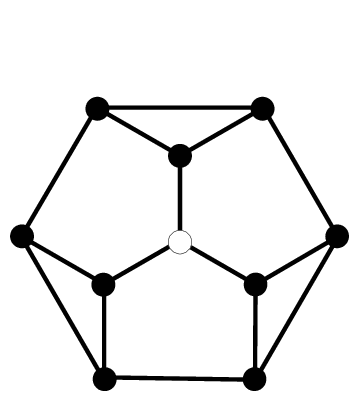}\includegraphics[width=0.38\textwidth]{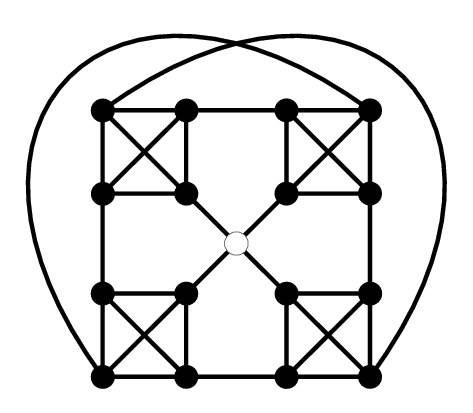}
\caption{Radial Moore graphs with maximum status for $d=3$ (left) and $d=4$ (right) respectively. The central vertex is drawn white. They correspond respectively to $G_3$ and $G_4$.}
\label{MaxStat}
\end{figure}

Note that this general upper bound is obtained when vertices at distance two from the central vertex have all maximum local status $3d(d-1)$. Nevertheless, if we take a look to radial Moore graphs of diameter $3$ and degrees $d=3$ and $d=4$ with greater status (shown in Figure \ref{MaxStat}) we see the following: these two graphs have status vector ${\mathbf s_1}:15^1,17^9$ and $\mathbf{s_2}:28^1,34^{16}$, respectively. Both have a single central vertex and all the other vertices share the same status. Their total status is, respectively, $s(G_1)=168$ and $s(G_2)=572$. Besides, the upper bound \eqref{eq:totalstatusbound} produces $s(G)\leq 174$ and $s(H) \leq 596$ for degrees $3$ and $4$, respectively. These small gaps are consequence from the fact that they do not have vertices with local status $3d^2-3d$. These two graphs can be expressed as members of a general family of graphs where every non central vertex has local status $3d^2-4d+2$ instead. One way to construct them is the following:

\begin{enumerate}
    \item Let $d \geq 3$, and let us take $d$ disjoint copies of the complete graph $K_{d-1}$ that we will denote as $H_1,H_2,\dots,H_d$.
    \item Attach a $1$-factor to this graph by adding a matching between vertices of different copies of these complete graphs in such a way that  there exists two unique adjacent vertices $u \in V(H_i)$ and $v \in V(H_j)$ joining $H_i$ and $H_j$ for all $i \neq j$. 
    \item Finally take the bipartite graph $K_{1,d}$ with partite sets $V_0=\{0\}$ and $V_1=\{1,2,\dots,d\}$ and joint every vertex $i$ to each vertex of $H_i$, for all $i=1,2,\dots,d$.
\end{enumerate}
Roughly speaking, if we hang this graph from vertex $0$, we see a Moore tree of high $2$, where every neighbor $i$ of $0$ is attached to the complete graph $H_i$, for all $i=1,\dots,d$, and all of these complete graphs have a `unique' connection between them. Let us see the status vector of this construction.
\begin{proposition}
Any graph $G_d$ constructed as above is a radial Moore graph of degree $d$, diameter $3$ and it has vector status 
$$
{\bf s}(G_d): (d^2-d)^1, (3d^2-4d+2)^{d^2}.
$$
\end{proposition}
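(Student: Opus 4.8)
The plan is to verify directly that $G_d$ is a radial Moore graph and then to read off the status vector by a breadth-first analysis from each of the three natural types of vertex: the hub $0$, a hub $i\in\{1,\dots,d\}$, and a vertex of some clique $H_i$. First I would note that Step~2 is always feasible --- for instance, label $V(H_i)=\{v_{i,j}:j\in\{1,\dots,d\}\setminus\{i\}\}$ and join $v_{i,j}$ to $v_{j,i}$ --- and, more importantly, that the distance computations below use only the abstract properties ``for each $i\neq j$ there is exactly one edge between $H_i$ and $H_j$, and each vertex of $\bigcup_i V(H_i)$ lies on exactly one such edge'', so the resulting status vector does not depend on which admissible matching is chosen. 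Then $|V(G_d)|=1+d+d(d-1)=d^2+1=M(d,2)$, and a degree count shows $G_d$ is $d$-regular: $0$ has degree $d$, a hub $i$ has degree $1+(d-1)=d$, and a vertex of $H_i$ has degree $(d-2)+1+1=d$. The radius and diameter will fall out of the layer computations.

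For vertex $0$ one has $\Gamma_1(0)=\{1,\dots,d\}$ and $\Gamma_2(0)=\bigcup_i V(H_i)$, of sizes $d$ and $d(d-1)$, so $\ecc(0)=2$ and $s(0)=d+2d(d-1)=d(2d-1)=s_{d,2}$, the Moore value. For a hub vertex it suffices, by the symmetry permuting $\{1,\dots,d\}$ together with the cliques, to treat $i=1$: here $\Gamma_1(1)=\{0\}\cup V(H_1)$ has size $d$, and $\Gamma_2(1)$ consists of the hubs $\{2,\dots,d\}$ (reached through $0$) together with, for each $j\neq 1$, the unique vertex of $H_j$ that is matched into $H_1$, while the remaining $d-2$ vertices of each such $H_j$ lie at distance $3$. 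Thus $|\Gamma_2(1)|=2(d-1)$ and $|\Gamma_3(1)|=(d-1)(d-2)$, so $\ecc(1)=3$ and $s(1)=d+4(d-1)+3(d-1)(d-2)=3d^2-4d+2$.

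The remaining, slightly more involved, case is a clique vertex $w\in V(H_1)$, whose matching partner I write as $p\in V(H_{j_0})$. Then $\Gamma_1(w)=(V(H_1)\setminus\{w\})\cup\{1,p\}$ has size $d$. Tracing two steps out: through $1$ we reach $0$; through $p$ we reach the hub $j_0$ and the other $d-2$ vertices of $H_{j_0}$; and through each of the $d-2$ vertices of $V(H_1)\setminus\{w\}$ we reach its matching partner, and these land in the $d-2$ cliques $H_j$ with $j\notin\{1,j_0\}$, one per clique. Nothing else is within distance $2$: the hubs $i\notin\{1,j_0\}$ and the remaining $d-2$ vertices of each such $H_j$ all lie at distance $3$, and to see this one uses exactly that $H_1$ sends precisely one edge into each $H_i$. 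Hence $|\Gamma_2(w)|=2+2(d-2)=2(d-1)$ and $|\Gamma_3(w)|=(d-1)(d-2)$, so $\ecc(w)=3$ and $s(w)=d+4(d-1)+3(d-1)(d-2)=3d^2-4d+2$.

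Collecting these: $0$ is the unique vertex of eccentricity $2$ and all other $d^2$ vertices have eccentricity $3$ (the hypothesis $d\geq 3$ is what guarantees a clique vertex actually sees a vertex at distance $3$), so $G_d$ has radius $2$ and diameter $3$, exactly one central vertex, and --- being $d$-regular of order $M(d,2)$ --- it is a radial Moore graph; its status vector is $d(2d-1)$ once and $3d^2-4d+2$ with multiplicity $d^2$. The one step that needs genuine care is the clique-vertex computation: correctly separating the second neighbourhood from the third is where the ``unique edge between $H_i$ and $H_j$'' hypothesis is essential, and it is also what makes the Step~2 construction essentially canonical.
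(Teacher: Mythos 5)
Your proof is correct and follows essentially the same route as the paper's: a breadth-first layer count from each of the three vertex types ($0$, a hub $i$, a clique vertex), using the ``exactly one edge between $H_i$ and $H_j$'' property to separate $\Gamma_2$ from $\Gamma_3$. One point worth flagging: you compute $s(0)=d+2d(d-1)=d(2d-1)=2d^2-d$, whereas the statement (and the paper's own proof) records the central status as $d^2-d$; your value is the correct one, as it matches the Moore status $s_{d,2}$ and the example ${\bf s}(G_3):15^1,17^9$, so the exponent-$1$ entry in the stated status vector appears to be a typo in the paper rather than an error in your argument. Your explicit verification that Step~2 is feasible and that the status vector is independent of the chosen matching is a small but genuine addition the paper leaves implicit.
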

\begin{proof}
$G_d$ is a clearly an $d$-regular graph of order $1+d^2$ by construction (it begins with an ($d-2$)-regular graph of order $d(d-1)$ at step $1$ and we add one edge to every vertex $u \in V(H_i)$ at each step). The new $1+d$ vertices added at step $3$ have also degree $d$. Vertex $0$ has eccentricity $2$ since every vertex in $H_i$ is reached from $0$ through vertex $i$, for all $i=1,\dots,d$ and hence it has status $d^2-d$. Any vertex $i$, where $i\in \{1,2,\dots,d\}$ has eccentricity $3$. To see this, observe that vertex $0$ and all the vertices of $H_i$ are adjacent to $i$ and hence at distance $2$ we have $2(d-1)$ vertices (all the vertices $\{1,2,\dots,d\}$ (except $i$)  plus $d-1$ vertices corresponding to the unique vertex $v \in V(H_j)$, $j \neq i$, that it has every vertex $u \in V(H_i)$ by definition at step $2$). The remaining vertices (the neighbors of $u$ inside each complete graph) are at distance $3$. This produces a status of $3d^2-4d+2$. A similar reasoning produces the same status for all the vertices inside each $H_i$ graph. Curiously enough, vertex $i$ and every vertex in $H_i$ share the same status, although the corresponding rooted graphs are different (but they have the same number of vertices at each distance). 
\end{proof}
In particular $G_d$ has total status $s(G_d)=3d^4-4d^3+3d^2-d$ which is asymptotically comparable with the upper bound \eqref{eq:totalstatusbound}.
An algebraic construction for $G_d$ is given by labelling the vertices of each $H_i$ as 
$$V(H_i)=\{(i,j) \, | \,  j=1,\dots,d; \, j\neq i\},$$  (see Fig. \ref{Esq}) and then the 'unique' connection between two different complete graphs $H_i$ and $H_k$ is given by $(i,k)\sim(k,i)$ for all $i\neq k$.
\begin{figure}[htbp]
\centering\includegraphics[width=0.85\textwidth]{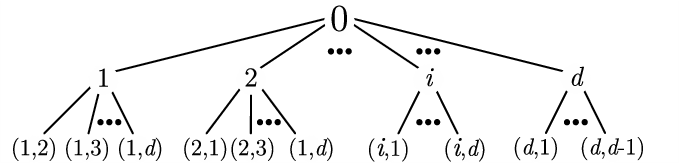}
\caption{Labelling of the Moore tree of $G_d$ hanging from its unique central vertex $0$.}
\label{Esq}
\end{figure}

There are many symmetries in the graph $G_d$, as one may expect looking at the examples $G_3$ and $G_4$ showed in Fig. \ref{MaxStat}. Notice that every $G_d$ contains $d$ different copies of the complete graph $K_d$ (the subgraph induced by vertex $i$ and the vertices belonging to $H_i$, for all $i=1,\dots,d$). It is well known that the automorphism group of $K_d$ is precisely the permutation group of $d$ elements (also known as the symmetric group $S_d$), and this is precisely what retains the full graph $G_d$ from the symmetries of its induced complete subgraphs, as next proposition shows. 

\begin{proposition}
The automorphism group of $G_d$ is the symmetric group of order $d$, that is, $\textrm{Aut}(G_d) \cong S_d$.
\end{proposition}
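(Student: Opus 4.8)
The plan is to show that the evident action of the symmetric group on the index set $\{1,\dots,d\}$ already produces every automorphism of $G_d$. Throughout I use the algebraic model: $V(G_d)=\{0\}\cup\{1,\dots,d\}\cup\bigcup_{i=1}^{d}V(H_i)$ with $V(H_i)=\{(i,j):j\neq i\}$, where $0$ is joined to $1,\dots,d$, each $i$ is joined to all of $V(H_i)$ (so $\{i\}\cup V(H_i)$ induces $K_d$), each $H_i$ is complete, and the only edges between distinct $H_i$ and $H_j$ are the ``unique connections'' $(i,j)\sim(j,i)$.

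First I would establish the embedding $S_d\hookrightarrow\textrm{Aut}(G_d)$. Given $\sigma\in S_d$, set $\phi_\sigma(0)=0$, $\phi_\sigma(i)=\sigma(i)$, and $\phi_\sigma((i,j))=(\sigma(i),\sigma(j))$. A routine verification shows that $\phi_\sigma$ preserves each of the three edge types (the star $0\sim i$, the $K_d$'s on $\{i\}\cup V(H_i)$, and the matching $(i,j)\sim(j,i)$), that $\phi_\sigma\circ\phi_\tau=\phi_{\sigma\tau}$, and that $\phi_\sigma=\mathrm{id}$ implies $\sigma=\mathrm{id}$ (read off the image of any vertex $i$). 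Hence $\sigma\mapsto\phi_\sigma$ is an injective homomorphism and $|\textrm{Aut}(G_d)|\geq d!$.

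The substance of the proof is the reverse inclusion: every $\phi\in\textrm{Aut}(G_d)$ equals some $\phi_\sigma$. Step one is that $\phi$ fixes $0$. By the previous proposition the status vector of $G_d$ is $(d^2-d)^1,(3d^2-4d+2)^{d^2}$, and since $3d^2-4d+2>d^2-d$ for $d\geq 3$, the vertex $0$ is the unique vertex of minimum status, hence is fixed by the status-preserving map $\phi$ (alternatively, $0$ is the unique vertex of eccentricity $2$, or the unique vertex lying in no triangle, as its neighbours $1,\dots,d$ form an independent set while every other vertex sits inside one of the $K_d$'s). Consequently $\phi$ permutes $\Gamma_1(0)=\{1,\dots,d\}$, giving $\sigma\in S_d$ with $\phi(i)=\sigma(i)$. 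Step two: since $0$ is fixed, $\phi$ carries $\Gamma_1(i)\setminus\{0\}$ onto $\Gamma_1(\sigma(i))\setminus\{0\}$, and $\Gamma_1(i)\setminus\{0\}=V(H_i)$, so $\phi(V(H_i))=V(H_{\sigma(i)})$ for every $i$. Step three: for $i\neq j$, the single edge of $G_d$ joining $V(H_i)$ to $V(H_j)$, namely $(i,j)\sim(j,i)$, must be sent to the single edge joining $V(H_{\sigma(i)})$ to $V(H_{\sigma(j)})$, namely $(\sigma(i),\sigma(j))\sim(\sigma(j),\sigma(i))$; since $\phi((i,j))\in V(H_{\sigma(i)})$ this forces $\phi((i,j))=(\sigma(i),\sigma(j))$. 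Thus $\phi$ coincides with $\phi_\sigma$ on all of $V(G_d)$, so $\textrm{Aut}(G_d)=\{\phi_\sigma:\sigma\in S_d\}\cong S_d$.

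The main obstacle --- though it is modest --- is Step three: one has to be certain that the very sparse matching between the cliques $H_i$ rigidifies the labelling of the $d-1$ vertices inside each clique, which a priori could be permuted freely within the $K_{d-1}$. The key observation making this work is that each $(i,j)\in V(H_i)$ is singled out as the unique vertex of $H_i$ adjacent to $H_j$, so once the blocks have been permuted by $\sigma$ there is no residual freedom. The remaining ingredients --- the edge checks in the embedding step and the reductions ``$0$ is fixed'' and ``blocks map to blocks'' --- are straightforward bookkeeping.
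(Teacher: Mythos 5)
Your proposal is correct and follows essentially the same route as the paper: embed $S_d$ via the natural relabelling action, fix $0$ as the unique central vertex, read off $\sigma$ from the action on $\Gamma_1(0)$, and use the unique matching edge $(i,j)\sim(j,i)$ between cliques to rigidify the labels inside each $H_i$. The only cosmetic differences are that the paper builds the embedding from transpositions rather than arbitrary permutations, and identifies $0$ by eccentricity alone.
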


\begin{proof}
Let us consider the labeling of the vertices of $G_d$ as given above (see Fig. \ref{Esq}). Let $\varphi_{i,j} \in S_d$ be the transposition $(i \ j)$, that is, the permutation of the set $\{1,2,\dots,d\}$ which exchanges elements $i$ and $j$ and keeps all others fixed, for all $i\neq j$. For each transposition $(i \ j)$ we define the mapping $\Phi_{ij}:V(G_d)\rightarrow V(G_d)$ such that:
\[
    \begin{array}{l}
        \Phi_{ij}(0)=0;\\
        \Phi_{ij}(a)=\varphi_{ij}(a), \ \forall a\in\{1,\dots,d\};\\
        \Phi_{ij}((a,b))=(\varphi_{ij}(a),\varphi_{ij}(b)), \ \forall a,b\in\{1,\dots,d\}, \ a\neq b.
    \end{array}
\]
Clearly $\Phi_{ij}$ is a graph automorphism of $G_d$ since all the adjacencies are preserved. As a consequence, every permutation of $S_d$ induces an automorphism of $G_d$. \\

On the other hand, given an automorphism $\phi$ of $G_d$ we have that $\phi(0)=0$ since it is the unique vertex of eccentricity $2$. Besides, every vertex $v \in \Gamma_l(0)$ must be mapped into the same set $\Gamma_l(0)$, for $l=1,2$, since the distance preserving spanning tree of any vertex in $\Gamma_1(0)$ is different from the one of a vertex in $\Gamma_2(0)$. Hence, the automorphism $\phi$ restricted for the set of vertices in $\Gamma_1(0)$ defines a permutation $\sigma \in S_d$  where $\sigma(i)=\phi(i)$ for all $i \in \{1,\dots,d\}(=\Gamma_1(0))$. Now we will see that $\sigma$ determines also the mapping $\phi$ for the vertices in $\Gamma_2(0)$. Indeed, since $i \sim (i,j)$, it turns out that for every $j \neq i$, it must exist $k \neq \sigma(i)$ such that $\phi((i,j))=(\sigma(i),k)$. In addition $(i,j) \sim (j,i)$ and hence $\phi((i,j)) \sim \phi((j,i))$, that is, $(\sigma(i),k) \sim (\sigma(j),k')$, that is, $k=\sigma(j)$. Hence, $\phi((i,j))=(\sigma(i),\sigma(j))$ proving that any automorphism of $G_d$ induces a permutation of $S_d$.
\end{proof}

\section{Final remarks and open problems}\label{sec:conc}

Radial Moore graphs appear as an approximation to Moore graphs when the diameter condition is relaxed. The status of a vertex can be useful to test how good is this approach. In this paper we present an upper bound on the status of a radial Moore graph. We start by applying two already-known results recursively to obtain a lower bound on the number of non-central vertices. Then, we focus on the case of diameter $3$  where we determine an upper bound in the status of every single vertex that is tight for some parameters of the degree $d$. An extra upper bound is presented for specific vertices. Finally, an infinite family of radial Moore graphs of diameter $3$ with high status is presented and we strongly believe that is maximal in terms of the status. 

\begin{conjecture}
    The radial Moore graph $G_d$ has maximal status for all $d \geq 3$.
\end{conjecture}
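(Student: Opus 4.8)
The plan is to recast the conjecture as an extremal problem on distance distributions and then to pin down the extremal structure. For a vertex $v$ of a radial Moore graph $G$ of degree $d$ and diameter $3$ one has $|\Gamma_1(v)|=d$ and $|\Gamma_2(v)|+|\Gamma_3(v)|=d^2-d$, hence $s(v)=d+2|\Gamma_2(v)|+3|\Gamma_3(v)|=3d^2-2d-|\Gamma_2(v)|$; summing over the $d^2+1$ vertices gives $s(G)=(d^2+1)(3d^2-2d)-2P_2(G)$, where $P_2(G)$ denotes the number of unordered vertex pairs at distance exactly $2$. Thus the conjecture is equivalent to the purely metric inequality $P_2(G)\ge P_2(G_d)=\tfrac12 d(d-1)(2d+1)$ for all such $G$ (equivalently, at most $\tfrac12 d^2(d-1)(d-2)$ pairs at distance $3$). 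I would then exploit the ``cherry count'': classifying the $(d^2+1)d(d-1)$ ordered paths $u\!-\!x\!-\!w$ of length $2$ with $u\neq w$ according to whether $d(u,w)$ equals $1$ or $2$ gives $\sum_{\{u,w\}:\,d(u,w)=2}\lambda_{uw}=\tfrac12\bigl((d^2+1)d(d-1)-6t\bigr)$, with $t$ the number of triangles of $G$ and $\lambda_{uw}=|\Gamma_1(u)\cap\Gamma_1(w)|$. Since $P_2=\sum_{d(u,w)=2}1=\sum_{d(u,w)=2}\lambda_{uw}-\sum_{d(u,w)=2}(\lambda_{uw}-1)$, the conjecture becomes
\[
3t(G)+\sum_{\{u,w\}:\,d(u,w)=2}(\lambda_{uw}-1)\ \le\ \frac{d^{2}(d-1)(d-2)}{2}.
\]
Because $G_d$ is the vertex-disjoint union of $d$ copies of $K_d$ together with triangle-free linking edges (the star at $0$ and the matching $(i,k)\sim(k,i)$), it has $t(G_d)=d\binom{d}{3}$ and every one of its distance-$2$ pairs has a unique common neighbour; hence $G_d$ meets this bound with equality, which fixes the extremal value and provides a consistency check.

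The next step is a reduction to a single central vertex. Each central vertex has status $2d^2-d$, whereas a non-central vertex $v$ has $|\Gamma_3(v)|\ge1$ and so $s(v)\ge 2d^2-d+1$; pushing $G$ toward having more central vertices therefore can only lower its status, so the extremal graph should carry exactly one central vertex, as $G_d$ does. I would treat graphs with two or more central vertices separately --- several eccentricity-$2$ vertices force a very rigid structure, which a similar layer analysis should show is sub-extremal --- and concentrate on a radial Moore graph $G$ with a unique central vertex $c$. Such a $G$ comes with the layering $V(G)=\{c\}\cup A\cup B$, $A=\Gamma_1(c)$, $|A|=d$, $B=\Gamma_2(c)$, $|B|=d(d-1)$: all pairs inside $A$ and all pairs containing $c$ lie at distance $\le 2$, so every distance-$3$ pair is an $A$--$B$ or a $B$--$B$ pair, and the diameter-$3$ condition sharply limits how $B$ can be wired (in $G_d$: $A$ is independent, $B$ splits into the $d$ cliques $H_i$ of vertices hung from $i$, and exactly one edge runs between each pair of these cliques).

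The core of the argument is to prove the displayed inequality for such a $G$, which I would split into two steps. Step (a): reduce to the case in which every distance-$2$ pair has a unique common neighbour, i.e. $\sum_{d(u,w)=2}(\lambda_{uw}-1)=0$, by exhibiting local $2$-switches that ``split'' a pair with $\lambda\ge2$ without decreasing $P_2$ and without breaking $d$-regularity, radius $2$, or diameter $3$. Step (b): show that a $d$-regular graph on $d^2+1$ vertices with radius $2$, diameter $3$, a single central vertex, and the unique-common-neighbour property has at most $d\binom{d}{3}$ triangles, with equality only for $G_d$. For (b) one uses that under the unique-common-neighbour property the common neighbourhood of every edge is a clique, whence the subgraph induced by each $\Gamma_1(v)$ is a disjoint union of cliques; writing $3t=\sum_v e_v$, where $e_v$ is the number of edges induced on $\Gamma_1(v)$, the bound amounts to showing that these ``local cluster graphs'' cannot, on average, be much denser than a single $K_{d-1}$ with one isolated neighbour (which is exactly what they are in $G_d$), the obstruction being the layer structure around $c$ together with Propositions~\ref{ncneigbors} and~\ref{nachgim}, which restrict how cliques and linking edges may be packed into a radius-$2$, diameter-$3$ graph of this order.

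The step I expect to be the real obstacle is (b): ruling out, uniformly in $d$, every alternative way of assembling cliques and linking edges into a radius-$2$, diameter-$3$, $d$-regular graph on $d^2+1$ vertices. It is worth stressing that the per-vertex estimates already available --- the Lemma, giving $|\Gamma_2(v)|\ge d$, and its Corollary --- are far from tight for $G_d$, whose non-central vertices all have $|\Gamma_2(v)|=2d-2$; so the proof will require genuinely new, global control over the sets $\Gamma_2(v)$, well beyond those bounds. Making the $2$-switches in step (a) provably respect the eccentricity constraints --- rather than merely relocating the difficulty --- is a secondary but still delicate point.
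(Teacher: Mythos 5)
This statement is an open conjecture of the paper --- the authors explicitly present $G_d$ only as \emph{conjectured} to be extremal --- so the question is whether your proposal actually closes it, and it does not. The reformulation is correct and worthwhile: $s(v)=3d^2-2d-|\Gamma_2(v)|$ gives $s(G)=(d^2+1)(3d^2-2d)-2P_2(G)$, so the conjecture is equivalent to $P_2(G)\ge\tfrac12 d(d-1)(2d+1)$; your values $t(G_d)=d\binom{d}{3}$ and $3d\binom{d}{3}=\tfrac12 d^2(d-1)(d-2)$ check out, and $G_d$ does achieve equality. But observe that, by the very cherry identity you invoke, $3t+\sum_{d(u,w)=2}(\lambda_{uw}-1)$ equals $\tfrac12(d^2+1)d(d-1)-P_2$ \emph{identically}, so your displayed inequality is a tautological restatement of the conjecture rather than a decomposition that gains ground; all of the content is deferred to steps (a) and (b), neither of which is proved.

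Step (a) is the real gap and is not an argument: you must exhibit, for every radial Moore graph containing a distance-$2$ pair with $\lambda_{uw}\ge2$, a $2$-switch that does not decrease $P_2$, preserves $d$-regularity, keeps a vertex of eccentricity $2$, keeps the diameter at $3$, and decreases a terminating monovariant. A generic switch destroys the Moore tree at the central vertex or pushes some pair to distance $\ge4$, and you give no reason such a switch always exists nor why the process terminates in the unique-common-neighbour class. By contrast, step (b) --- which you flag as the obstacle --- is essentially free once (a) is granted: the unique-common-neighbour property makes each induced $\Gamma_1(v)$ a disjoint union of cliques (an induced path $u\sim x\sim w$ with $u\not\sim w$ would give the distance-$2$ pair $u,w$ the two common neighbours $v$ and $x$), which by connectedness cannot be a single $K_d$ and so has at most $\binom{d-1}{2}$ edges, while the central vertex lies on no triangle and contributes $0$; summing $3t=\sum_v e(\Gamma_1(v))\le d^2\binom{d-1}{2}$ is exactly the required bound. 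Finally, the reduction to a single central vertex is also incomplete: the crude count (each of $c$ central vertices has status $2d^2-d$, the rest at most $3d(d-1)$) only excludes $c\ge\frac{d^2-2}{d-2}\approx d+2$, leaving the cases $2\le c\le d+2$ untreated. As it stands this is a sensible programme, not a proof, and the conjecture remains open.
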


The upper bound for the maximum number of central vertices in a radial Moore graph presented in Prop. \ref{prop:centralvertices} is poor for $k=2$, since it provides the value ${\cal C}(d,2) \leq M(d,2)-6$ which seems to be far from the exact number of ${\cal C}(d,2)$ for large $d$. Even for $d=7$, where a Moore graphs exists, ${\cal C}(7,2)$ seems to be $28$ (far from $50-6=44$) which corresponds to make a particular edge swap to the Hoffman-Singleton graph.

\begin{problem}
    Improve the bounds for ${\cal C}(d,k)$, specially for the case $k=2$.
\end{problem}

\bibliographystyle{unsrt}
\bibliography{biblio}
\end{document}